\def\inte{\int\limits}
\def\e{\mathsf{E}}
\newtheorem{thm}{Theorem}
\theoremstyle{definition}
\theoremstyle{definition}\newtheorem{lemma}{Lemma}
\author[$\dagger$]{Eugenio P. 
Balanzario\thanks{ebg@matmor.unam.mx}}
\affil[$\dagger$]{Centro de Ciencias Matem\'aticas,
Universidad Nacional Aut\'onoma de M\'exico,
Apartado Postal 61-3 (Xangari),
Morelia  Michoac\'an, M\'exico}
\begin{document}

\title{A note on the distribution of
clusters and deserts of prime numbers}

\maketitle

\begin{abstract}
In this note we consider the distribution
of values of weighted sums of the
von Mangoldt arithmetical function.
By using a formula for the distribution
of values of trigonometric polynomials,
we are able to present evidence 
supporting the claim that these
weighted sums follow a distribution with 
a normal-like behavior.
\end{abstract}

\noindent
{\bf Keywords}:
Primes in short intervals,
Riemann zeta function,\\
Explicit formulas.

\medskip

\noindent
{\bf MathSciNet classification}:
11K65, 11K70, 11N05, 11N37.

\section*{Introduction}
It is the purpose of this
note to consider the following 
sums involving the von Mangoldt
arithmetical function $\Lambda(j)$,
\begin{eqnarray}\label{sden}
S_\sigma(n)=\frac{1}{\sigma\sqrt{2\pi}}
\sum_{|j|\leq\eta\sigma}
\frac{n}{n+j}\Lambda(n+j)
\exp\Big\{-\frac{1}{2}\Big(\frac{j}{\sigma}
\Big)^2\Big\}.
\end{eqnarray}
This sum $S_\sigma(n)$ is a weighted count
of prime powers lying in an interval 
of the form $\Omega(n,\sigma)
=(n-\eta\kern.03cm\sigma,n+\eta
\kern.03cm\sigma)$ with $\eta$ very slowly 
increasing. From the Prime Number 
Theorem, one is to expect that $S_\sigma(n)$ is
approximately equal to 1 for all $n\geq n_0$
and $\sigma>\sigma_0$. 
Deviations of $S_\sigma(n)$
from its expected value 1 are interesting to
consider because they put into evidence the
existence of intervals $\Omega(n,\sigma)$ 
with a superabundance of
prime powers when $S_\sigma(n)>1$. In this
case we say that we have clusters of prime powers.
Moreover, whenever $S_\sigma(n)<1$, then we have 
an interval $\Omega(n,\sigma)$ with a 
relative deficiency of prime powers, and
in this case we have deserts of prime powers.

In this note, we apply a formula given by
M. Kac \cite{Kac} for the
distribution of values of trigonometric
polynomials, in order to determine
the distribution of values of 
$\widehat{S}_\sigma(x)$
as $x$ runs in an interval of the form $(a,b)$.
Here, $\widehat{S}_\sigma(x)$ is
an approximation for $S_\sigma(n)$ 
in formula (\ref{sden}) and is
given by the following explicit formula (\ref{dos}), 
presented in \cite{Balanzario}, save for
a minor simplification in the
expression for $\widehat{S}_\sigma(x)$, which
we will account for in the last 
section of this note.

\begin{thm}\label{t1}
Assume the Riemann hypotheses and
the simplicity of the zeros 
$\beta+i\gamma$ of the zeta function. 
Let $\sigma$, $\eta$ and $\theta$ be
positive real numbers. 
For $n\in\mathbb{N}$, let $S_\sigma(n)$ be
as in (\ref{sden}) and for $x\in\mathbb{R}$, let
\begin{align}\label{sdex}
\widehat{S}_\sigma(x) =&
1-
\frac{1}{\sqrt{n}}
\sum_{|\gamma|\leq n\theta/\sigma}
\exp\Big\{-\frac{1}{2}\Big(
\frac{\sigma\gamma}{n}\Big)^2\Big\}
\cos(\gamma\log x).
\end{align} 
Then we have that, as $n\to\infty$,
\begin{align}\label{dos}
S_\sigma(n)=\widehat{S}_\sigma(n)+O
\bigg(
\frac{\log n}{\eta e^{\frac{1}{2}
\eta^2}}+
\frac{n^{\frac{3}{2}}\log n}
{\theta e^{\frac{1}{2}
\theta^2}}\bigg).
\end{align}
\end{thm}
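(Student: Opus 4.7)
The natural strategy is to combine the classical explicit formula with a Mellin-transform evaluation of the Gaussian weight. Substituting $m=n+j$ rewrites
\[
S_\sigma(n) \;=\; \frac{n}{\sigma\sqrt{2\pi}}\sum_{|m-n|\leq\eta\sigma}\frac{\Lambda(m)}{m}\exp\{-(m-n)^2/(2\sigma^2)\}.
\]
The first step is to drop the truncation on $m$: since $\Lambda(m)\leq\log m$ and the Gaussian tail satisfies $\int_{\eta\sigma}^\infty e^{-v^2/(2\sigma^2)}\,dv\ll\sigma e^{-\eta^2/2}/\eta$, the error introduced is $O(\log n/(\eta e^{\eta^2/2}))$, which accounts for the first term in (\ref{dos}).

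Next, write $\phi_n(u)=u^{-1}\exp\{-(u-n)^2/(2\sigma^2)\}$ and let $\tilde\phi_n(s)$ denote its Mellin transform; the Gaussian suppresses the $u^{s-2}$ singularity at $u=0$ up to a factor $O(e^{-n^2/(2\sigma^2)})$, so $\tilde\phi_n$ extends to an entire function of rapid decay along vertical lines. Inserting Mellin inversion into the untruncated sum and using $-\zeta'(s)/\zeta(s)=\sum_m\Lambda(m)m^{-s}$ gives, for any $c>1$,
\[
\frac{n}{\sigma\sqrt{2\pi}}\sum_{m\geq 2}\Lambda(m)\phi_n(m) \;=\; -\frac{n}{\sigma\sqrt{2\pi}}\cdot\frac{1}{2\pi i}\int_{(c)}\frac{\zeta'(s)}{\zeta(s)}\,\tilde\phi_n(s)\,ds.
\]
Shifting the contour leftward and invoking the Riemann hypothesis together with the simplicity of zeros, the residue at $s=1$ contributes the main term and each simple zero $\rho=\tfrac12+i\gamma$ contributes $-\tilde\phi_n(\rho)$; trivial zeros and boundary arcs yield lower-order quantities to be absorbed in the final error.

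The core calculation is the evaluation of $\tilde\phi_n(s)$ on the critical line. Using the substitution $u=ne^w$ the Gaussian becomes concentrated near $w=0$ with width of order $\sigma/n$, and a saddle-point estimate yields
\[
\tilde\phi_n(s) \;=\; \sigma\sqrt{2\pi}\,n^{s-2}\exp\{-\sigma^2(s-1)^2/(2n^2)\}\bigl(1+O(\sigma/n)\bigr),
\]
so the main term evaluates to $1$ and each conjugate pair of zeros contributes $\frac{2}{\sqrt n}\cos(\gamma\log n)\exp\{-\sigma^2\gamma^2/(2n^2)\}$, matching (\ref{sdex}). Finally, restricting the sum over zeros to $|\gamma|\leq n\theta/\sigma$ produces the second error term, estimated via the Riemann--von Mangoldt bound $N(T)\ll T\log T$ and integration by parts against the Gaussian cutoff.

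The principal obstacle is to control the non-Gaussian corrections in the saddle-point estimate uniformly for $|\gamma|$ up to the truncation threshold $n\theta/\sigma$, and to justify the horizontal contour shifts through heights free of zeros using classical bounds on $\zeta'/\zeta$. These manipulations are technical but standard, and the combined accounting of the Taylor-expansion errors together with the zero-truncation tail is what ultimately determines the size of the second $O$-term in (\ref{dos}).
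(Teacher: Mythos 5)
Your proposal is essentially correct in outline, but it takes a genuinely different route from the paper. The paper does not re-derive the explicit formula at all: it quotes formula (1.4) of \cite{Balanzario} (the ``smooth Landau explicit formula''), which already expresses $S_\sigma(n)$ as $1-\frac{\sqrt{n}}{\sigma}\sum_{|\gamma|\leq n\theta/\sigma}\widetilde{w}_\alpha(\gamma)(\sigma^2/n)^{i\gamma}+E$ with the Gamma-ratio weight $\widetilde{w}_\alpha(\gamma)=\Gamma(\alpha-1/2+i\gamma)/\Gamma(\alpha)$, $\alpha=(n/\sigma)^2$, and with exactly the error term of (\ref{dos}) built in. The entire content of the paper's proof is then an application of Stirling's formula to show $\widetilde{w}_\alpha(y)=\alpha^{-1/2}\exp\{-y^2/(2\alpha)\}\,\alpha^{iy}\{1+O(1/\alpha)\}$, after which pairing $\gamma$ with $-\gamma$ produces the cosine in (\ref{sdex}). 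You instead rebuild the explicit formula from scratch: untruncating the Gaussian sum (your first error term matches the paper's), Mellin inversion against $-\zeta'/\zeta$, contour shift, and a saddle-point evaluation of the transform at the zeros. This is the machinery that presumably underlies the cited result, so your route is more self-contained but also carries all the burden that the citation discharges: the justification of the contour shift, the trivial-zero and horizontal-segment estimates, and the precise bookkeeping that yields the specific second error term $n^{3/2}\log n/(\theta e^{\theta^2/2})$, none of which you actually carry out.

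Two technical points to fix if you pursue your route. First, your displayed saddle-point formula has a sign error: the substitution $u=ne^w$ gives $\tilde\phi_n(s)=\sigma\sqrt{2\pi}\,n^{s-2}\exp\{+\sigma^2(s-1)^2/(2n^2)\}(1+o(1))$; with the minus sign as written, the contribution of a zero $\tfrac12+i\gamma$ would grow like $\exp\{+\sigma^2\gamma^2/(2n^2)\}$ rather than decay, contradicting the (correct) final contribution you state. Second, $\tilde\phi_n$ is not entire: $\int_0^1 u^{s-2}e^{-(u-n)^2/(2\sigma^2)}\,du$ continues meromorphically with simple poles at $s=1,0,-1,\dots$ whose residues are $O(e^{-n^2/(2\sigma^2)})$; these are harmlessly small, but the statement as given is inaccurate and the residue computation at $s=1$ should acknowledge the resulting double pole.
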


In the following theorem, $\widehat{N}_a^b(\lambda)$
is the number of $x\in(a,b)$ such that
$\widehat{S}_\sigma(x)=\lambda$. It turns out
that Kac's computation scheme does not allow us
to obtain an estimate for $\widehat{N}_a^b(\lambda)$.
Instead, following Kac's computation
scheme, we obtain an estimate for
the related quantity $\e\kern.03cm \widehat{N}_a^b(\lambda)$.
This quantity is obtained by writing $\cos(\gamma_j
\log(x) +\Phi_j)$ instead of $\cos(\gamma_j \log x)$
in formula (\ref{sdex}), where $\{\Phi_j\}$ is a set
of independent and uniformly distributed random
variables, and then computing an expected value
over these variables. Proceeding
in this manner, and after minor adjustments,
Kac's computation
scheme gives us the following result.

\begin{thm}\label{t2}
Assume the hypotheses of Theorem~\ref{t1}
concerning the Riemann zeta function.
Let $\sigma=\sqrt{\rho\kern.05cm b\log b}$
where $\rho>3$.
Let $\widehat{N}_a^b(\lambda)=\mathrm{Card}\{x\in(a,b):
\widehat{S}_\sigma(x)=\lambda\}$. 
Let $a=\vartheta\kern.03cm b$ where
$\vartheta\in(0,1)$. Then, for the expected value
of $\widehat{N}_a^b(\lambda)$, we have,
\begin{eqnarray}\label{integral}
\e\kern.03cm
\widehat{N}_a^b(\lambda)=
\frac{1+o(1)}{\sqrt{2}\kern.05cm
\pi\sigma}
\inte_a^b
\exp\Big\{-\frac{\sigma\sqrt{\pi}(\lambda-1)^2}
{\log\{t/(4\pi\sigma)\}}\Big\}\>dt
\end{eqnarray}
as $b\to\infty$, where $o(1)\to0$.
\end{thm}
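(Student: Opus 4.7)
The plan is to apply Kac's formula (the Kac--Rice identity) for the expected number of level crossings of a random function. Let $X(x)=\widehat{S}_\sigma(x)-1$ in the randomized version of (\ref{sdex}), where each $\cos(\gamma_j\log x)$ has been replaced by $\cos(\gamma_j\log x+\Phi_j)$ with the $\Phi_j$ independent uniform on $[0,2\pi)$; then $\widehat{N}_a^b(\lambda)$ counts $x\in(a,b)$ with $X(x)=\lambda-1$, and the Kac--Rice formula reads
\[
\e\,\widehat{N}_a^b(\lambda)=\inte_a^b\inte_{-\infty}^{\infty}|y|\,p_x(\lambda-1,y)\,dy\,dx,
\]
where $p_x(u,y)$ is the joint density of $(X(x),X'(x))$. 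The randomization is tailored precisely so that this density can be written down explicitly in the large-$x$ limit.

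The first step is to note that $(X(x),X'(x))$ is centered with $\e\,[X(x)X'(x)]=0$: this follows from $\cos\alpha\sin\alpha=\tfrac12\sin 2\alpha$ applied term by term, combined with the independence of the phases. A central-limit argument then makes the joint distribution asymptotically bivariate Gaussian with independent coordinates, the number of surviving summands being of order $(x/\sigma)\log x$, which tends to infinity under the choice $\sigma=\sqrt{\rho\,b\log b}$ with $\rho>3$. The second step is to compute the variances $V_1(x)=\v(X(x))$ and $V_2(x)=\v(X'(x))$: pairing conjugate zeros $\pm\gamma$, approximating the sum over positive $\gamma$ by an integral against the Riemann--von Mangoldt density $\frac{1}{2\pi}\log(t/(2\pi))$, changing variables $u=\sigma\gamma/x$, and using $\int_0^\infty e^{-u^2}du=\sqrt\pi/2$ and $\int_0^\infty u^2e^{-u^2}du=\sqrt\pi/4$, one obtains
\[
V_1(x)\sim\frac{\log\{x/(4\pi\sigma)\}}{2\sqrt{\pi}\,\sigma},
\qquad
V_2(x)\sim\frac{\log\{x/(4\pi\sigma)\}}{4\sqrt{\pi}\,\sigma^{3}},
\]
the $O(1)$ discrepancy between $\log\{x/(2\pi\sigma)\}$ and $\log\{x/(4\pi\sigma)\}$ being absorbed into the $1+o(1)$ factor since both quantities are of order $\tfrac12\log b$ on $(a,b)=(\vartheta b,b)$.

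With these two ingredients in hand the Kac--Rice integral evaluates directly: the independent Gaussian marginals render the inner $y$-integral equal to $\sqrt{2V_2(x)/\pi}$, so
\[
\e\,\widehat{N}_a^b(\lambda)\sim\inte_a^b\frac{1}{\pi}\sqrt{\frac{V_2(x)}{V_1(x)}}\,\exp\!\Big(\!-\frac{(\lambda-1)^2}{2V_1(x)}\Big)\,dx.
\]
Substituting the asymptotics gives $\sqrt{V_2/V_1}\sim 1/(\sigma\sqrt2)$, producing the prefactor $1/(\sqrt2\,\pi\sigma)$, while $1/(2V_1)\sim\sigma\sqrt\pi/\log\{x/(4\pi\sigma)\}$ recovers the exponent claimed in (\ref{integral}).

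The hardest step will be justifying the bivariate Gaussian approximation uniformly enough in $x\in(a,b)$ to insert into Kac's formula; this is where the hypothesis $\rho>3$ appears to enter, both to keep a sufficient number of summands for the central limit theorem to kick in and to ensure that $\log\{x/(4\pi\sigma)\}$ stays bounded away from zero throughout $(\vartheta b,b)$. A secondary, more routine difficulty is uniform control over $x$ of the subleading $\log u$ term in the variance integral, which contributes only to the $1+o(1)$ factor but must nevertheless be estimated with care.
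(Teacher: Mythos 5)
Your proposal is correct in its conclusions and is, at bottom, the same method as the paper's: both are instances of Kac's scheme for counting level crossings of a random trigonometric polynomial, and your variance asymptotics $V_1(x)\sim\log\{x/(4\pi\sigma)\}/(2\sqrt{\pi}\,\sigma)$ and $V_2(x)\sim\log\{x/(4\pi\sigma)\}/(4\sqrt{\pi}\,\sigma^3)$ agree exactly with the paper's Lemma~\ref{l1} (which computes $h_1=V_1/2$ and $h_2=x^2V_2/2$ after the substitution $t=\log x$), so the prefactor $1/(\sqrt2\,\pi\sigma)$ and the exponent in (\ref{integral}) come out right. The genuine difference is in how the joint law of $(X(x),X'(x))$ is handled. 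You invoke the Kac--Rice identity plus a bivariate central limit theorem and explicitly defer ``justifying the bivariate Gaussian approximation uniformly enough in $x$''; that deferred step is precisely where all of the paper's technical work lives. The paper does not assert a CLT: it follows Kac's original 1943 computation, writing the zero count as a double Fourier integral of $\cos(\xi f_\lambda(t))\,|f_\lambda'(t)|$, representing $|\cdot|$ by $\frac{1}{\pi}\int(1-\cos(\eta x))\eta^{-2}\,d\eta$, and computing the exact characteristic function $\e\exp\{i(\xi f_\lambda+\eta f_\lambda')\}=\cos(\xi\lambda)\prod_j J_0\big(a_j(t)\sqrt{\xi^2+\eta^2\gamma_j^2}\big)$, which it then shows is Gaussian to within $1+o(1)$ on the ranges $|\xi|\le M$, $|\eta|\le N$ using the uniform bound (\ref{cota}) on the coefficients (this is where $\sigma=\sqrt{\rho\,b\log b}$ enters, making every $a_j(t)$ small so that $\log J_0(u)=-u^2/4+O(u^4)$ applies term by term). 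In other words, the paper proves the local limit theorem you need by an explicit characteristic-function estimate rather than assuming it; note also that $\rho>3$ is used not for the Gaussian approximation but to force the error term of Theorem~\ref{t1} to vanish via the choice $\theta=\sqrt{\rho\log b}$. A further small point: the paper must first pass to the variable $t=\log x$ so that the frequencies $\gamma_j$ are genuinely linear (Kac's setting), and it must silently discard the $a_j'(t)$ contribution to $f_\lambda'(t)$; your formulation directly in $x$ avoids the change of variables but inherits the same need to show that differentiating the slowly varying amplitudes is negligible compared with differentiating the cosines. To turn your sketch into a complete proof you would have to supply the uniform two-dimensional local CLT with tail control sufficient to integrate $|y|\,p_x(\lambda-1,y)$; the cleanest way to do that is exactly the Bessel-product computation the paper carries out.
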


Considering that we are more interested in $\widehat{N}_a^b(\lambda)$
rather than in $\e\kern.03cm \widehat{N}_a^b(\lambda)$,
the question arises as whether the right hand
side of formula (\ref{integral}) does provide
a relevant estimate for $\widehat{N}_a^b(\lambda)$.
In order to throw light on this question,
numerous computations where performed
with distinct values for $b$
and with $a=0.5\kern.03cm b$. 
The results of a selection of these
computations are illustrated in Figures
~\ref{fig1},~\ref{fig2} and~\ref{fig3}. 
In Figure~\ref{fig2}
we have set $b$ to be equal to
four distinct values up to $b=10^9$. 
Each one of the dots in
this Figure~\ref{fig2} is of 
the form $\big(\lambda,\widehat{N}_a^b(\lambda)\big)$
and in solid line it is plotted the right hand
side of formula (\ref{integral}) as
a function of $\lambda$. In Figure~\ref{fig2}
we appreciate a good agreement between the
observed values $\big(\lambda,\widehat{N}_a^b(\lambda)\big)$
and the normal-like distribution given by
Theorem~\ref{t2}.
These computations are evidence in support
to the claim that the right hand
side of formula (\ref{integral}) does indeed
approximately describe the behavior of $\widehat{N}_a^b(\lambda)$
as $\lambda$ runs over the interval $(a,b)$.
Thus, we have empirical evidence
supporting the claim that formula (\ref{integral})
does indeed describe the frequency 
of clusters and deserts of prime
numbers.

\begin{figure}
\begin{center}
\includegraphics[width=12.5cm]{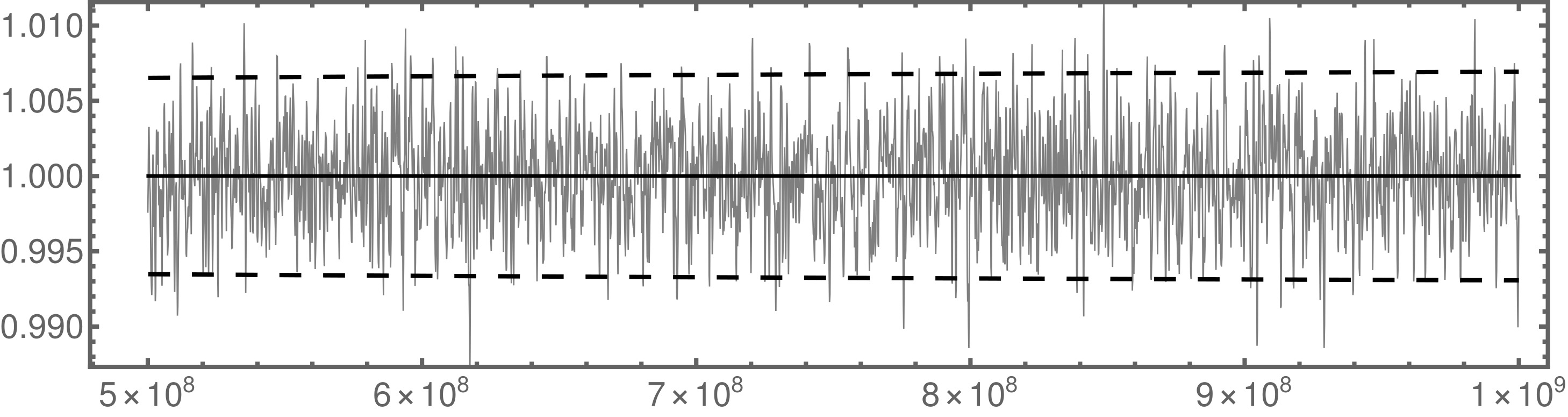}
\caption{Illustrates formula (\ref{sdex})
of Theorem~\ref{t1}
by showing $\widehat{S}_\sigma(x)$
for $x\in(a,b)$ with $b=10^9$ and
$a=0.5\kern.05cm b$. Notice that $\widehat{S}_\sigma(x)$
oscillates around its expected value 1. With
dashed lines are marked the levels $1\pm2\sigma$, 
so that approximately $95.45\%$ of the values
of $\widehat{S}_\sigma(x)$ lie within these 
limits.}
\label{fig1}
\end{center}
\end{figure}

\begin{figure}
\begin{center}
\includegraphics[width=12.5cm]{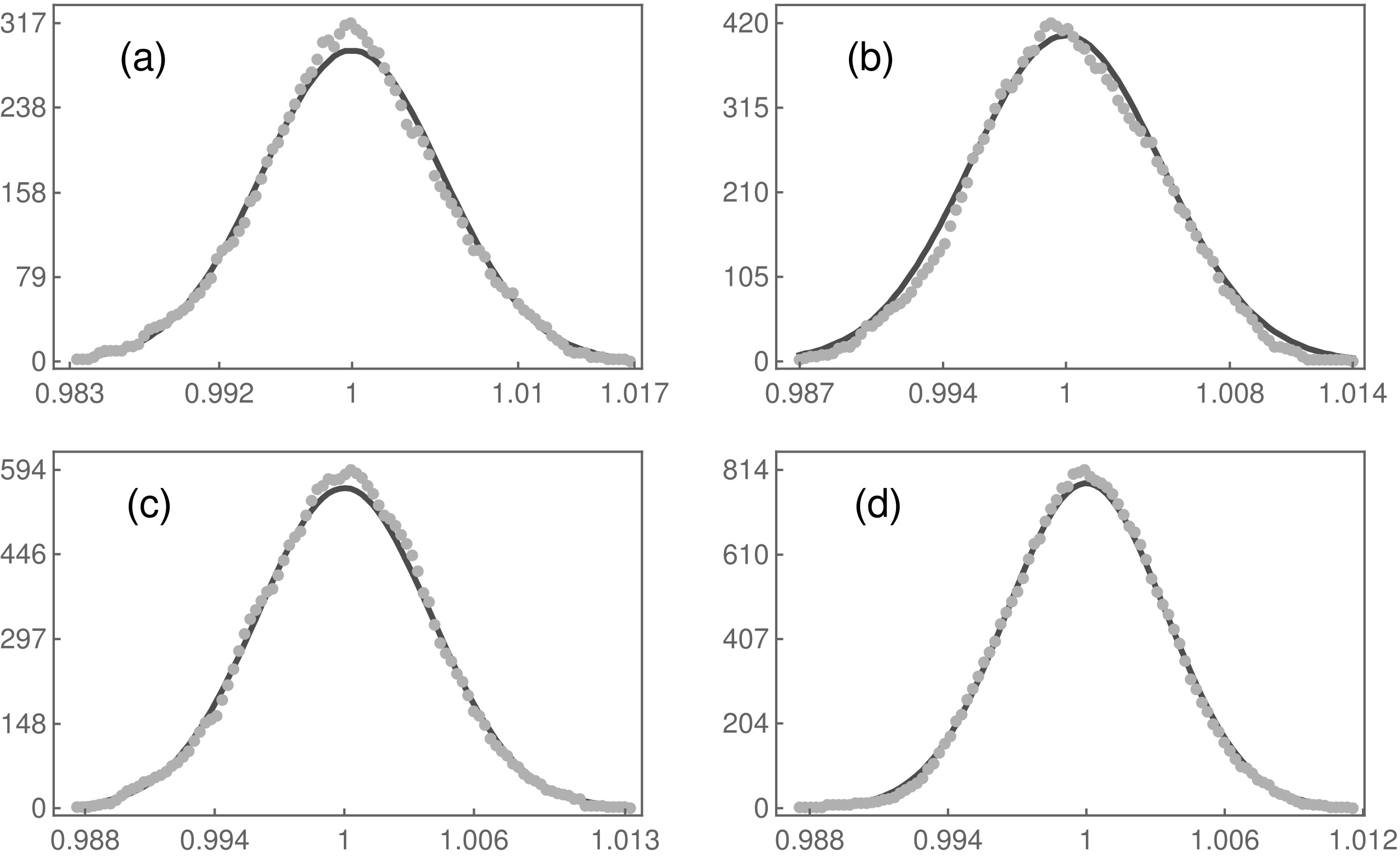}
\caption{Illustrates Theorem~\ref{t2} by
showing one hundred 
uniformly selected values of $\widehat{S}_\sigma(x)$
for each one of the following four values
of $b$. In part (a) of the figure we
have set $b=1.25\times10^8$, in part (b)
we have $b=2.5\times10^8$, in part (c)
we have $b=5\times10^8$ and in part (d)
we have $b=10^9$. In each part of this
figure, in continuous line
is the corresponding 
distribution of Theorem~\ref{t2}.}
\label{fig2}
\end{center}
\end{figure}

\begin{figure}
\begin{center}
\includegraphics[width=12.5cm]{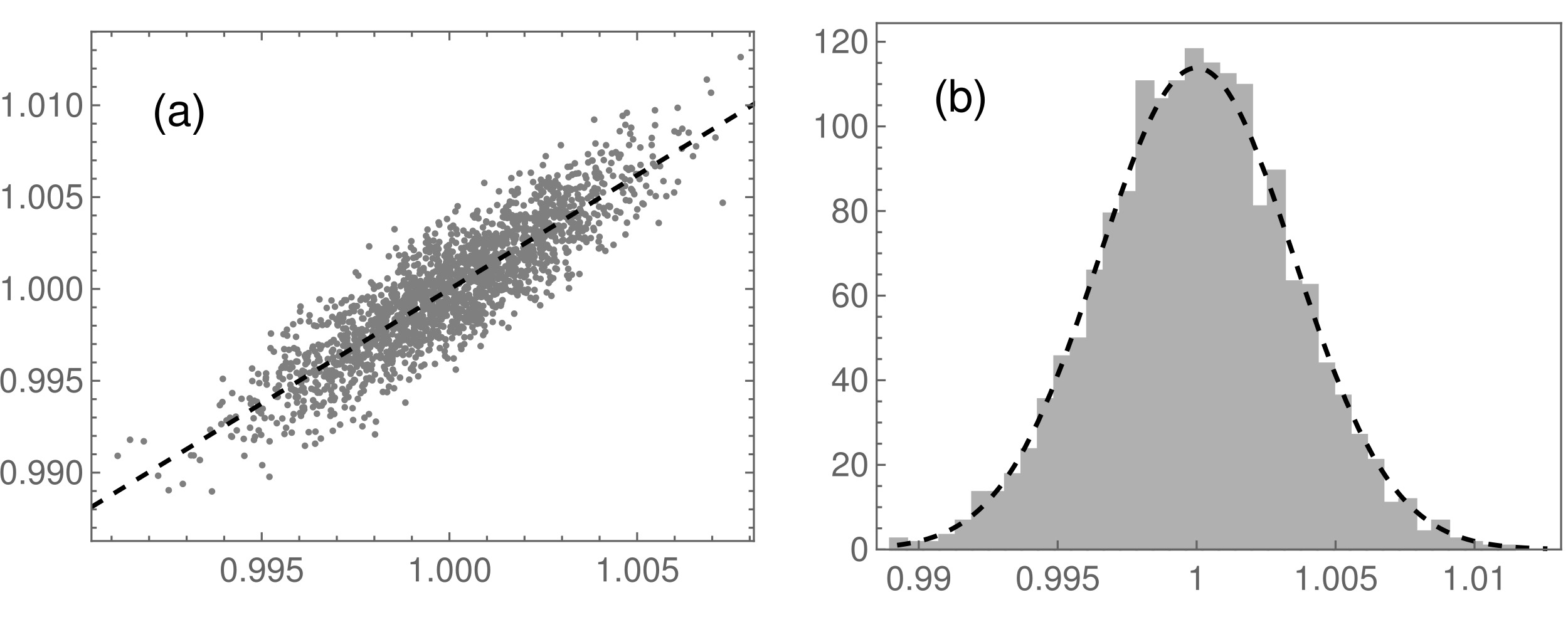}
\caption{In part (a) of the figure, we present the
dispersion diagram of 2\,005 points
of the form $\big(\widehat{S}_\sigma(x),
R_\sigma(x)\big)$ where $R_\sigma(x)$ is
as in equation (\ref{erre}). The values of
$\widehat{S}_\sigma(x)$ in the abscissa are those that
appear in Figure~\ref{fig1}
with $b=10^9$ and $a=0.5\kern.05cm b$.
In part (b) of the figure is
the histogram of the ordinates $R_\sigma(x)$ in
the previous dispersion diagram. In dashed line is
the density function of a normal probability
distribution with mean 1 and variance as
in formula (\ref{varianza}).}
\label{fig3}
\end{center}
\end{figure}

A word about the parameter $\eta$ in
formula (\ref{dos}) is in order (and
for the choice of the value of $\theta$,
see the proof of Theorem~\ref{t2} below). For
the purpose of approximating $S_\sigma(n)$
by $\widehat{S}_\sigma(n)$ for
$n\in(a,b)$, it is enough
to choose $\eta$ in such a way that
the first term within Landau's symbol
tends to zero, but for this to happen,
it is sufficient that $\eta$ grows
very slowly as $b\to\infty$. For example,
$\eta=2\sqrt{\log\log b}$ is an admissible 
choice for $\eta$. In all the computations
described in the preceding 
paragraph, a very close
agreement between $S_\sigma(n)$ and
$\widehat{S}_\sigma(x)$ was obtained with
$\eta=4$. Let $N_a^b(\lambda)=\mathrm{Card}
\{n\in(a,b):S_\sigma(n)=\lambda\}$. Hence,
provided that $\widehat{N}_a^b(\lambda)$, as
stated in Theorem~\ref{t2}, behaves as 
$\e\widehat{N}_a^b(\lambda)$, then $N_a^b(\lambda)$
will also have a distribution of values 
as given in the
right hand side of formula (\ref{integral}).

It is also interesting to address the question of how
much information does $\widehat{S}_\sigma(x)$
provide about the actual 
density of prime numbers in
an interval with center at $x$ and length $2\sigma$,
rather than about the weighted count of prime powers
given by $S_\sigma(n)$ in formula (\ref{sden}).
In order to throw light on this question,
a number of numerical experiments were performed.
For example, in part (a) of Figure~\ref{fig3} 
we show the dispersion
diagram of a set of 2\,005 
points of the form $\big(\widehat{S}_\sigma
(x), R_\sigma(x)\big)$ where $x\in(a,b)$
with $b=10^9$ and $a=0.5\kern.03cm b$ and
where we have set
\begin{align}\label{erre}
R_\sigma(x) =
\big(\pi(x+\sigma)-\pi(x-\sigma)\big)
\frac{\log x}{2\sigma},
\end{align}
$\pi(x)$ being the number of primes less
than or equal to $x$.
The computed slope of the fitted regression line
equals $m=1.26$ and the calculated
correlation coefficient 
is equal to $r=0.88$. In part (b)
of Figure~\ref{fig3} we show the
histogram of values $R_\sigma(x)$
in the previous dispersion diagram
and with a dashed line it is shown
the density function of a normal
probability distribution with mean
1 and variance equal to 
\begin{align}\label{varianza}
\Big(\frac{m}{r}\Big)^2\times
\frac{1}{2\kern.03cm\sigma\sqrt{\pi}}
\log\Big(\frac{b}{4\pi\sigma}\Big),
\end{align}
$m$ and $r$ being as above.
Notice that the second factor
in expression (\ref{varianza}) equals
$1/2s$, where $s$ is the coefficient of
$(\lambda-1)^2$ within the exponential
function in formula (\ref{integral}).
By repeating these computations
with distinct values for $b$ 
(but all the time with 
$a=0.5\kern.03cm b$), we always found
a very good agreement\footnote{As
measured by the $p$-value of
a statistical goodness of fit test.} 
between the
distribution of values of $R_\sigma(x)$
and the corresponding normal probability 
density function with mean 1 and
variance (\ref{varianza}). It was also 
found that the quotient $m/r$
is rather stable and oscillates
slightly around the number 1.42.

\section*{Proof of Theorem~\ref{t2}}

We begin the proof of Theorem~\ref{t2} by
determining an appropriate value 
for $\theta$ in formula (\ref{dos})
of Theorem~\ref{t1}. 
For this purpose, we require that 
\[
\frac{b^{\frac{3}{2}}\log b}
{\theta e^{\frac{1}{2}\theta^2}}
\leq\frac{\log b}{b^{\frac{1}{2}
(\rho-3)}}
\] 
with $\rho>3$, so that the
second term within Landau's symbol
in equation (\ref{dos})
tends to zero as $b\to\infty$.
For this purpose it is
enough if we set $\theta=
\sqrt{\rho\log b}$,
as stated in Theorem~\ref{t2}.

Now we notice that the cosine function in
equation (\ref{sdex}) is evaluated
in $\log t$ instead of in $t$, as it
is done in Kac's paper \cite{Kac}. Thus, 
a change of variable is in order.
Let $f(t)$ be such that
\[
\widehat{S}_\sigma(e^t)=1-2f(t),
\]
so that
\[
f(t)=\sum_{j=1}^k
a_j(t)\cos(\gamma_jt)
\kern.5cm\hbox{with}\kern.5cm
a_j(t)=\exp\Big\{-\frac{1}{2}
\Big(\Big(\frac{\sigma}{e^t}
\gamma_j\Big)^2+t\Big)\Big\},
\]
and $k=N(b\kern.03cm\theta/\sigma)$ where
$N(x)=\mathrm{Card}\{\gamma\in(0,x]\}$ 
counts the number of nontrivial
zeros $\beta+i\gamma$
of the Riemann zeta function with 
positive imaginary part less than $x$.

Notice that the coefficients $a_j(t)$ of
the trigonometric polynomial $f(t)$ are
functions of $t$. 
This fact is not accounted for
in \cite{Kac} and in what
follows we slightly modify Kac's computation
scheme so that it also applies in 
the present case, where $a_j(t)$ as
defined above are non constant.
Notice also that, for $\lambda\in\mathbb{R}$,
\begin{align}\label{cambio}
\widehat{S}_\sigma(t)=\lambda
\kern1cm\hbox{if and only if}\kern1cm
f(t)=\frac{1-\lambda}{2}.
\end{align}
Kac's computation scheme requires
that we consider the
following random trigonometric polynomial
\[
f_\lambda(t)=-\lambda+\sum_{j=1}^k
a_j(t)\cos(\gamma_jt+\Phi_j)
\]
where the random variables
$\Phi_1,\cdots,\Phi_k$
are independent and uniformly distributed
in the interval $(-\pi,\pi)$.
We are interested in counting the
number of values of $t\in(A,B)$ such that
$f_\lambda(t)=0$, where we
have set $A=\log a$ and $B=\log b$. For this end,
we partition the interval $(A,B)$
into $k'$ parts $(\beta_{j-1},\beta_j)$ such that
$f_\lambda'(t)$ does not change sign for 
$t\in(\beta_{j-1},\beta_j)$. The following 
double integral is to be considered,
\begin{eqnarray*}
I &=&\frac{1}{2\pi}
\inte_{-\infty}^{+\infty}
\bigg[\inte_A^B\cos\big(\xi\kern.03cm f_\lambda(t)\big)
|f_\lambda'(t)|\>dt\bigg]d\xi\\
\vbox{\kern.9cm}
&=&
\frac{1}{2\pi}\inte_{-\infty}^{+\infty}
\bigg[\sum_{j=1}^{k'} s_j
\inte_{\beta_j-1}^{\beta_j}
\cos\big(\xi\kern.03cm f_\lambda(t)
\big)f_\lambda'(t)\>dt\bigg]d\xi
\end{eqnarray*}
where $s_j=1$ if $f_\lambda'(t)>0$ for 
$t\in(\beta_{j-1},\beta_j)$,
and $s_j=-1$ if $f_\lambda'(t)<0$. Changing variables in 
the innermost  integral, we get 
\begin{eqnarray*}
I &=& \frac{1}{2\pi}
\inte_{-\infty}^{+\infty}
\bigg[\sum_{j=1}^{k'} s_j
\inte_{f_\lambda(\beta_j-1)}^{f_\lambda(\beta_j)}
\cos(\xi\kern.03cm t)\>dt\bigg]\>d\xi\\
\vbox{\kern.9cm}
&=&
\sum_{j=1}^{k'} \frac{1}{2\pi}\Bigg|
\inte_{-\infty}^{+\infty}
\frac{\sin\big(\xi\kern.03cm f_\lambda(\beta_j)\big)-
\sin\big(\xi\kern.03cm f_\lambda(\beta_{j-1})\big)}
{\xi}\>d\xi\Bigg|.
\end{eqnarray*}
This last expression is a sum
of Dirichlet integrals and therefore
is equal to the number
of zeros of $f_\lambda(t)$ in the interval
$(A,B)$. Since (\ref{cambio}) is true,
then our task now is to
compute $\e(I)=\e\kern.03cm \widehat{N}_a^b(1-2\lambda)$.

For the computation of $\e(I)$,
in what follows the integral $I$ 
is considered form
another point of view. For this purpose, 
Kac notices that, for $x\in\mathbb{R}$ we have,
\[
|x|=\frac{1}{\pi}
\inte_{-\infty}^{+\infty}
\frac{1-\cos(\eta\kern.03cm x)}{\eta^2}\>d\eta.
\]
Therefore,
\begin{gather}\label{ii}
I=\frac{1}{2\pi^2}
\inte_{-\infty}^{+\infty}\bigg[
\inte_A^B\inte_{-\infty}^{+\infty}
\cos\big(\xi\kern.03cm f_\lambda(t)\big)
\frac{1-\cos\big(\eta\kern.03cm f_\lambda'(t)\big)}
{\eta^2}\>d\eta
\>dt\bigg]d\xi.
\end{gather}
In order to  compute
the expected value of $I$,
use is made of the identity
\[
4\cos(\xi\kern.03cm f_\lambda)
\cos(\eta\kern.03cm f_\lambda')=\sum
\exp\{i(\pm \xi f_\lambda\pm\eta f_\lambda')\},
\]
where the sum is over the four
combinations of the signs inside
the exponential function. The expression
$\exp\{i(\pm\xi f_\lambda\pm\eta f_\lambda')\}$
is a function of the random
variables $\Phi_1,\cdots\Phi_k$
and is itself a random variable
with expected value
\begin{align*}
Q &= \frac{1}{(2\pi)^k}
\inte_{-\pi}^{+\pi}\cdots\inte_{-\pi}^{+\pi}
\exp\{i(\pm \xi\kern.03cm f_a
\pm\eta\kern.03cm f_a')\}
\>d\varphi_1\cdots d\varphi_k\\
&=
\cos(\xi\kern.03cm \lambda)
\prod_{j=1}^k
\frac{1}{2\pi}\inte_{-\pi}^{+\pi}
e^{i(\pm\xi \kern.03cm
a_j(t)\cos(\gamma_jt+\varphi_j)\mp
\eta\kern.03cm a_j(t)\gamma_j
\sin(\gamma_jt+\varphi_j))
}\>d\varphi_j.
\end{align*}
Given two real numbers $x$ and $y$,
a straightforward computation gives
\begin{gather*}
\frac{1}{2\pi}\inte_{-\pi}^{+\pi}
\exp\big\{i\big(\pm x\cos(\gamma t+\varphi)
\mp y\sin(\gamma t+\varphi)
\big)\big\}\>d\varphi=J_0\big(\sqrt{x^2+y^2}\big),
\end{gather*}
where $J_0(x)$ is the Bessel
function of the first kind.
Therefore,
\begin{eqnarray}\label{q}
Q=\cos(\xi\kern.03cm \lambda)
\prod_{j=1}^k J_0\Big(a_j(t)
\sqrt{\xi^2+\eta^2\gamma_j^2}\Big).
\end{eqnarray}
In section number 2 of \cite{Kac}, Kac
gives an argument to justify taking
the expectation inside the triple
integral (\ref{ii}). After taking expectation
with respect to $\Phi_1,\cdots,\Phi_k$, formula
(\ref{ii}) becomes
\[
I=\frac{1}{2\pi^2}
\inte_{-\infty}^{+\infty}\bigg[
\inte_A^B\inte_{-\infty}^{+\infty}
D(\xi,\eta,t)\>d\eta
\>dt\bigg]d\xi
\]
where
\begin{align}\label{de}
D(\xi,\eta,t)=
\cos(\xi\kern.03cm\lambda)
\frac{\prod_{j=1}^kJ_0(a_j(t)|\xi|)-
\prod_{j=1}^kJ_0\Big(a_j(t)\sqrt{\xi^2+\eta^2
\gamma_j^2}\Big)}{\eta^2}.
\end{align}

Our aim is now to give $Q$, as given
in equation (\ref{q}), a more
manageable form. For this purpose we
write
\begin{eqnarray}\label{ese}
h_1(t)=
\sum_{j=1}^k
a_j^2(t)
\kern1.2cm\hbox{and}\kern1.2cm
h_2(t)=
\sum_{j=1}^k
a_j^2(t)\gamma_j^2.
\end{eqnarray}
Note that
for all $0<t<B=\log b$ and $j\in\mathbb{N}$ we have
\begin{eqnarray}\label{cota}
0\leq a_j(t)\leq
\frac{1}{(2e)^{\frac{1}{4}}
\sqrt{\sigma\kern.05cm\gamma_j}}<
\frac{1}{(b\log b)^{\frac{1}{4}}},
\end{eqnarray}
since $\sigma=\sqrt{\rho\kern.05cm b\log b}$.
Let $M=(b\log b)^{\frac{1}{4}}
\times o(1)$,
where the symbol $o(1)$ stands for appropriate 
functions tending slowly to zero as $b\to\infty$.
If $|\xi|\leq M$, then
we have that $\xi^2a_j^2(t)<o(1)$, and
since $\log J_0(x)=-x^2/4+O(x^4)$ as $x\to0$,
then
\begin{align}
\begin{split}\label{ji}
\prod_{j=1}^k
J_0(a_j(t)|\xi|)&=
\prod_{j=1}^k\exp\Big\{
-\frac{1}{4}\Big(\xi^2a_j^2(t)+
O\big(\xi^4a_j^4(t)\big)\Big)\Big\}\\
&=
\exp\Big\{-\frac{1}{4}\xi^2h_1(t)
(1+o(1))\Big\}.
\end{split}
\end{align}
On the other hand, 
for all $0<t<B=\log b$ and $\gamma_j\leq b\theta/\sigma$,
inequality (\ref{cota}) implies that
\begin{gather*}
a_j(t)\gamma_j<
\frac{1}{\sqrt{\sigma\kern.03cm\gamma_j}}\gamma_j
\leq\frac{\sqrt{b\kern.03cm\theta}}{\sigma}
<\frac{1}{(\log b)^{\frac{1}{4}}}.
\end{gather*}
Now let 
$N=(\log b)^{\frac{1}{4}}\times o(1)$ and notice that, 
if $|\xi|\leq M$ and $|\eta|\leq N$, then
\[
a_j(t)\sqrt{\xi^2+\eta^2\gamma_j^2}
\leq 
a_j(t)|\xi|+a_j(t)\gamma_j|\eta|
=o(1).
\]
Therefore we can write
\[
\prod_{j=1}^k J_0\Big(a_j(t)
\sqrt{\xi^2+\eta^2\gamma_j^2}\Big)=
\exp\bigg\{-\frac{1}{4}
\sum_{j=1}^k
a_j^2(t)(\xi^2+\eta^2\gamma_j^2)
+E\bigg\}
\]
where
\[
E\ll\sum_{j=1}^k
a^4_j(t)(\xi^2+\eta^2\gamma_j^2)^2
\ll o(1)\times\big(\xi^2h_1(t)+
\eta^2h_2(t)\big).
\]
In view of the above,
now we have, for $|\xi|\leq M$
and $|\eta|\leq N$,
\begin{align}\label{exp}
\prod_{j=1}^k J_0\Big(a_j(t)
\sqrt{\xi^2+\eta^2\gamma_j^2}\Big)=
\exp\Big\{-\frac{1}{4}\big(\xi^2\tilde{h}_1(t)
+\eta^2\tilde{h}_2(t)\big)\Big\}
\end{align}
where we have written 
$\tilde{h}_j(t)=h_j(t)(1+o(1))$, for $j=1,2$.
Using expressions (\ref{ji}) and 
(\ref{exp}) in formula
(\ref{de}) for $I$, we get
\begin{align*}
D(\xi,\eta,t) &=
\frac{\cos(\xi\kern.03cm\lambda)}{\eta^2}
\bigg[e^{-\frac{1}{4}\xi^2\tilde{h}_1(t)}-
e^{-\frac{1}{4}(\xi^2\tilde{h}_1(t)
+\eta^2\tilde{h}_2(t))}\Big]
\\
&=\cos(\xi\kern.03cm\lambda)
\kern.03cm e^{-\frac{1}{4}\xi^2\tilde{h}_1(t)}
\frac{1-e^{-\frac{1}{4}
\eta^2\tilde{h}_2(t)}}{\eta^2}.
\end{align*}
Therefore,
\begin{align*}
\e(I) &=
\frac{1}{2\pi^2}
\inte_A^B\bigg[
\inte_{-\infty}^{+\infty}\cos(\xi\kern.03cm \lambda)
\kern.03cm e^{-\frac{1}{4}\xi^2\tilde{h}_1(t)}
\inte_{-\infty}^{+\infty}
\frac{1-e^{-\frac{1}{4}
\eta^2\tilde{h}_2(t)}}{\eta^2}
\>d\eta\>d\xi\bigg]dt
\\
\vbox{\kern.9cm}
&=\frac{1}{2\pi^2}\inte_A^B\bigg[
\inte_{-M}^{+M}\cos(\xi\kern.03cm \lambda)
\kern.03cm e^{-\frac{1}{4}\xi^2\tilde{h}_1(t)}
\inte_{-N}^{+N}
\frac{1-e^{-\frac{1}{4}
\eta^2\tilde{h}_2(t)}}{\eta^2}
\>d\eta\>d\xi\bigg]dt+o\big(\e(I)\big),
\end{align*}
as $b\to\infty$.
Taking the two inner integrals
over the whole real line, we get
\begin{align}\label{edei}
\begin{split}
\e(I) &=
\frac{1}{2\pi^{\frac{3}{2}}}\inte_A^B
\sqrt{\tilde{h}_2(t)}
\inte_{-\infty}^{+\infty}\cos(\xi\kern.03cm \lambda)
\kern.03cm e^{-\frac{1}{4}\xi^2
\tilde{h}_1(t)}\>d\xi\, dt
+o\big(\e(I)\big)\\
\vbox{\kern.9cm}
&=
\frac{1}{\pi}\inte_A^B
\sqrt{\frac{\tilde{h}_2(t)}{\tilde{h}_1(t)}}
\exp\Big\{-\frac{\lambda^2}
{\tilde{h}_1(t)}\Big\}\>dt+
o\big(\e(I)\big)
\end{split}
\end{align}
as $b\to\infty$.

In the next lemma, we determine the
asymptotic behavior of $h_1(t)$ and $h_2(t)$
defined in equation (\ref{ese}).

\begin{lemma}\label{l1}
Let $\vartheta\in(0,1)$.
For $t\in(\vartheta\kern.03cm b,b)$ and
$\sigma=\sqrt{\rho\kern.05cm b\log b}$,
\begin{eqnarray*}
h_1(\log t) &=&
\frac{1}{4\sigma\sqrt{\pi}}\Big(
\log\Big(\frac{t}{4\pi\sigma}\Big)-
\frac{\gamma}{2}\Big)
+O\Big(\frac{1}{b}\Big),\\
\vbox{\kern.8cm}
h_2(\log t) &=&
\frac{t^2}{8\sigma^3\sqrt{\pi}}\Big(
\log\Big(\frac{t}{4\pi\sigma}
\Big)+\frac{2-\gamma}{2}\Big)+
O\Big(\frac{1}{b}\Big),
\end{eqnarray*}
as $b\to\infty$, where $\gamma$ is Euler's constant.
\end{lemma}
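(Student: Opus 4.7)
The plan is to write $a_j^2(\log t)=\frac{1}{t}\exp(-\sigma^2\gamma_j^2/t^2)$, so that, setting $\psi_t(u)=\exp(-\sigma^2 u^2/t^2)$, we have
\[
h_1(\log t)=\frac{1}{t}\sum_{0<\gamma_j\leq b\theta/\sigma}\psi_t(\gamma_j),\qquad
h_2(\log t)=\frac{1}{t}\sum_{0<\gamma_j\leq b\theta/\sigma}\gamma_j^2\,\psi_t(\gamma_j).
\]
Each sum is a smoothly weighted sum over nontrivial zeros with a Gaussian cutoff at scale $t/\sigma\asymp\sqrt{b/\log b}$. Since the truncation point is $b\theta/\sigma$, which is of order $\sqrt{b\log b}\cdot\theta/\sigma\gg t/\sigma$, extending the summation to infinity introduces an error of size $e^{-\theta^2}=b^{-\rho}$, which is absorbed in the claimed remainder.

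The key step is to convert the sums to integrals via the Riemann--von Mangoldt formula $N(T)=\frac{T}{2\pi}\log\frac{T}{2\pi e}+O(\log T)$. Writing each sum as a Riemann--Stieltjes integral $\int_0^\infty f(u)\,dN(u)$ and integrating by parts against $N(u)=\frac{1}{2\pi}[u\log\frac{u}{2\pi}-u]+E(u)$ with $E(u)\ll\log(u+2)$, I get
\[
\sum_{\gamma_j}f(\gamma_j)=\frac{1}{2\pi}\inte_0^\infty f(u)\log\frac{u}{2\pi}\,du+O\!\left(\inte_0^\infty|f'(u)|\log(u{+}2)\,du\right),
\]
applied with $f(u)=\psi_t(u)$ for $h_1$ and $f(u)=u^2\psi_t(u)$ for $h_2$. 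A short computation (substitute $v=\sigma u/t$) shows the error integrals are much smaller than the main terms.

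Next I evaluate the main integrals by substituting $v=\sigma u/t$, which reduces everything to standard Gaussian moments. Specifically, I use
\[
\inte_0^\infty e^{-v^2}dv=\tfrac{\sqrt\pi}{2},\quad
\inte_0^\infty e^{-v^2}\log v\,dv=-\tfrac{\sqrt\pi}{4}(\gamma+2\log 2),
\]
and the analogous identities with an extra $v^2$ factor, namely $\int_0^\infty v^2 e^{-v^2}dv=\sqrt\pi/4$ and $\int_0^\infty v^2 e^{-v^2}\log v\,dv=\frac{\sqrt\pi}{8}(2-\gamma-2\log 2)$, which come from differentiating $\int_0^\infty v^{s-1}e^{-v^2}dv=\tfrac{1}{2}\Gamma(s/2)$ at $s=1$ and $s=3$. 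The $-\log 2$ terms combine with $\log(t/2\pi\sigma)$ to give $\log(t/4\pi\sigma)$, producing the constants $-\gamma/2$ and $(2-\gamma)/2$ stated in the lemma.

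The main obstacle I anticipate is bookkeeping of the error terms: one must check that (i) the $O(\log T)$ error in $N(T)$ only contributes something of lower order after integration by parts against $\psi_t$ or $u^2\psi_t$, (ii) the truncation at $\gamma\leq b\theta/\sigma$ is harmless because $\psi_t$ decays like $e^{-\theta^2}$ there, and (iii) the integrands near $u=0$ (where $\log(u/2\pi)$ is singular and $N(u)$ vanishes below $\gamma_1\approx 14$) do not spoil the estimate. Once these points are settled, both asymptotics follow by routine Gaussian integral evaluations, and the uniformity in $t\in(\vartheta b,b)$ comes from the fact that every bound depends on $t$ only through $t/\sigma$, which stays of order $\sqrt{b/\log b}$.
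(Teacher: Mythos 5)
Your proposal is correct and follows essentially the same route as the paper: both convert the sum over zeros into a Riemann--Stieltjes integral against $N(u)$, integrate by parts, insert the Riemann--von Mangoldt asymptotic, and reduce to Gaussian log-moment integrals (your explicit use of $\Gamma'(1/2)$ and $\Gamma'(3/2)$ is exactly where the constants $-\gamma/2$ and $(2-\gamma)/2$ come from). If anything, your treatment of the truncation at $b\theta/\sigma$ and of the $O(\log T)$ fluctuation in $N(T)$ is more explicit than the paper's sketch, which simply substitutes $N(x)\sim\frac{x}{2\pi}\log\frac{x}{2\pi}-\frac{x}{2\pi}+\frac{7}{8}$ into the integrated-by-parts expression.
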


\begin{proof}
Here we sketch the proof of the approximation
for $h_1(\log t)$. Note that,
with $N(x)$ being the number of the positive
imaginary parts of the nontrivial zeros
of $\zeta(s)$ not greater than $x$,
\begin{align}\label{ene}
\begin{split}
h_1(&\log t) =
\frac{1}{t}\inte_0^{\sqrt{b}}
\exp\Big\{-\Big(
\frac{\sigma}{t}x\Big)^2\Big\}\>
dN(x)\\
&=\frac{2\sigma^2}{t^3}
\inte_{0}^\infty
x\kern.03cm N(x)
\exp\Big\{-\Big(
\frac{\sigma}{t}x\Big)^2\Big\}\>
dx+O\Big(\exp\Big\{
-\frac{1}{2}\vartheta^2\log^2 b
\Big\}\Big).
\end{split}
\end{align}
Let us recall that
\begin{eqnarray*}
N(x)\sim\frac{x}{2\pi}
\log\Big(\frac{x}{2\pi}\Big)-
\frac{x}{2\pi}+\frac{7}{8},
\end{eqnarray*}
as $x\to\infty$ (see \cite{Edwards}, page 134).
Using this relation in expression
(\ref{ene}), we find that $h_1(\log t)$ is
as stated in the lemma.
\end{proof}

Now we can finish the proof of
Theorem~\ref{t2}.
From Lemma~\ref{l1} we have
\[
\sqrt{\frac{
\tilde{h}_2(t)}{
\tilde{h}_1(t)}}=
\frac{e^t}{\sqrt{2}\kern.03cm\sigma}
\big(1+o(1)\big)
\]
as $b\to\infty$.
Hence, the expression for
$\e(I)$ in (\ref{edei}) becomes
\begin{align*}
\e(I) &=
\frac{1+o(1)}
{\sqrt{2}\kern.03cm\pi\kern.03cm\sigma}
\inte_A^B e^t\exp\Big\{
-\frac{\lambda^2}{\tilde{h}_1(t)}\Big\}\>dt\\
\vbox{\kern.9cm}
&=
\frac{1+o(1)}
{\sqrt{2}\kern.03cm\pi\kern.03cm\sigma}
\inte_a^b \exp\Big\{
-\frac{\lambda^2}{h_1(\log t)}\Big\}\>dt.
\end{align*}
Using the expression for $h_1(\log t)$
of Lemma~\ref{l1}, we have
\begin{align*}
\e(I)=
\frac{1+o(1)}
{\sqrt{2}\kern.03cm\pi\kern.03cm\sigma}
\inte_a^b \exp\Big\{
-\frac{4\sigma\sqrt{\pi}\kern.03cm\lambda^2}
{\log\{t/(4\pi\sigma)\}
}\Big\}\>dt.
\end{align*}
Recalling the second equation in (\ref{cambio}),
now we must perform the change of variable
$\lambda\mapsto(1-\lambda)/2$ in this
expression for $\e(I)=\e\kern.03cm \widehat{N}_a^b(1-2\lambda)$.
This finishes the proof of
Theorem~\ref{t2}.\qed

\section*{Proof of Theorem~\ref{t1}}

For the proof of Theorem~\ref{t1}, we
let $\alpha=(\mu/\sigma)^2$.
Then we have (see \cite{Balanzario}, 
formula (1.4))
\begin{align}\label{final}
S_\sigma(n) =
1-
\frac{\sqrt{n}}{\sigma}
\sum_{|\gamma|\leq n\theta/\sigma}
\widetilde{w}_\alpha(\gamma)
\Big(\frac{\sigma^2}{n}\Big)^{i\gamma}+E
\end{align}
where $\widetilde{w}_\alpha(\gamma)=
\Gamma(\alpha-1/2+i\gamma)/\Gamma(\alpha)$
and $E$ stands for the error term in formula
(\ref{dos}). Now we only need to simplify
the terms within the sum in formula (\ref{final}).
For $y$ such that $|y|\ll\sqrt{\alpha}$, we have
(see \cite{Balanzario}, 
Lemma 4.3)
\[
|\widetilde{w}_\alpha(y)|=
\frac{1}{\sqrt{\alpha}}\exp\Big\{
-\frac{y^2}{2(\alpha-1/2)}
\Big\}\Big\{1
+O\Big(\frac{1}{\alpha}
\Big)\Big\}.
\]
Let $z=x+iy$ and $A=\Gamma(\alpha+z)/
\Gamma(\alpha)$.
By Stirling's formula
\[
\log\Gamma(\alpha)
=\alpha\log\alpha-
\alpha+\frac{1}{2}
\log\frac{2\pi}{\alpha}+
O\Big(\frac{1}{\alpha}\Big)
\]
we have that
(see \cite{Balanzario}, 
proof of Lemma 4.3)
\begin{align*}
\log A
&=
\log\Gamma(\alpha+z)-
\log\Gamma(\alpha)\\
&=
\vbox{\kern.7cm}
\Big(\alpha+z-\frac{1}{2}\Big)\log(\alpha+z)
-(\alpha+z)
-\Big(\alpha-\frac{1}{2}\Big)\log(\alpha)
+\alpha+O\Big(\frac{1}{\alpha}\Big)\\
\vbox{\kern.7cm}
&=\log\alpha^z+
\Big(\alpha+z-\frac{1}{2}\Big)
\log\Big(1+\frac{z}{\alpha}\Big)-z
+O\Big(\frac{1}{\alpha}\Big).
\end{align*}
The imaginary part of
$\log A$ is equal to
\begin{align*}
y\log\alpha &+(\alpha-1)
\arctan\frac{y}{\alpha-1/2}+y
\log\Big|1+\frac{-1/2+iy}{\alpha}\Big|-y
+O\Big(\frac{1}{\alpha}\Big)\\
\vbox{\kern.7cm}
&=y\log\alpha+(\alpha-1)
\arctan\frac{y}{\alpha-1/2}-y
+O\Big(\frac{1}{\alpha}\Big)\\
\vbox{\kern.7cm}
&=y\log\alpha+(\alpha-1)
\frac{y}{\alpha-1/2}-y\\
\vbox{\kern.7cm}
&=y\log\alpha+O\Big(\frac{1}{\alpha}\Big).
\end{align*}
Hence, we have that
\[
\widetilde{w}_\alpha(y)=
\frac{1}{\sqrt{\alpha}}\exp\Big\{
-\frac{y^2}{2\alpha}
\Big\}\alpha^{iy}\Big\{1
+O\Big(\frac{1}{\alpha}
\Big)\Big\}.
\]
Now we notice that
\begin{align*}
\frac{\sqrt{n}}{\sigma}
\frac{1}{\sqrt{\alpha}}
&\exp\Big\{-\frac{1}{2\alpha}y^2\Big\}
\bigg[\alpha^{iy}
\Big(\frac{\sigma^2}{n}\Big)^{iy}+
\alpha^{-iy}
\Big(\frac{\sigma^2}{n}\Big)^{-iy}
\bigg]\\
\vbox{\kern.8cm}
&=
\frac{1}{\sqrt{n}}
\exp\Big\{-\frac{1}{2\alpha}y^2\Big\}
[n^{iy}+n^{-iy}]\\
\vbox{\kern.8cm}
&=
\frac{2}{\sqrt{n}}
\exp\Big\{-\frac{1}{2\alpha}y^2\Big\}
\cos(y\log n).
\end{align*}
This finishes the proof of
Theorem~\ref{t1}.\qed

\end{document}